\definecolor{Gray}{gray}{0.9}   
\theoremstyle{definition}
\newtheorem{definition}{Definition}[section]
\newtheorem{example}[definition]{Example}
\newtheorem{remark}[definition]{Remark}
\theoremstyle{plain}
\newtheorem{theorem}[definition]{Theorem}
\newtheorem*{theorem*}{Theorem}
\newtheorem{lemma}[definition]{Lemma}
\theoremstyle{remark}
\newcommand{\OO}{\mathscr{O}}
\newcommand{\PP}{\mathbb{P}}
\newcommand{\Supp}{\text{\upshape{Supp}}}
\newcommand{\Pic}{\text{\upshape{Pic}}}
\newcommand{\cE}{\mathscr{E}}
\newcommand{\PPr}{{\mathbb{P}^r}}
\newcommand{\PPE}{{\PP(\cE)}}
\newcommand{\cC}{\mathscr{C}}
\newcommand{\cI}{\mathscr{I}}
\newcommand{\NN}{\mathbb{N}}
\numberwithin{equation}{section}
\title[A note on syzygies and normal generation for trigonal curves]{A note on syzygies and normal generation for trigonal curves}
\author[M. Hoff]{Michael Hoff} 
\address{Universit\"at des Saarlandes, Campus E2 4, D-66123 Saarbr\"ucken, Germany}
\email{\href{mailto:hahn@math.uni-sb.de}{hahn@math.uni-sb.de}} 
\begin{document}

\title{A note on syzygies and normal generation for trigonal curves}
\date{\today}

\keywords{trigonal curve, normal generation, special divisors}
\subjclass[2010]{
 14H45,   	%Special algebraic curves and curves of low genus
 14H51,  	%Special divisors on curves (gonality, Brill-Noether theory)
 13D02   	%Syzygies, resolutions, complexes and commutative rings
}
\thanks{This project was partially funded by the Deutsche Forschungsgemeinschaft (DFG, German Research Foundation) – Project-ID 286237555 – TRR 195.}

\begin{abstract}
Let $C$ be a trigonal curve of genus $g\ge 5$ and let $T$ be the unique trigonal line bundle inducing a map $\pi: C \stackrel{3:1}{\longrightarrow} \PP^1$. 
This note provides a short and easy proof of the normal generation for the residual line bundle $K_C\otimes T^{-1}$ for curves of genus $g\ge 7$. Moreover, we compute the minimal free resolution of the embedded curve $C\subset \PP(H^0(C,K_C\otimes T^{-n})^*)$ for the residual line bundle $K_C\otimes T^{-n}$ for $n\ge 1$ and $g \ge 3n+4$. 
\end{abstract}

\maketitle

%%%%%%%%%%%%%%%%%%%%%%%%%%%%%%%%%%%%%%%%
\section{Introduction} \label{introduction}

In this short note we show how to determine the minimal free resolution of a trigonal curve lying on a rational normal surface scroll following \cite{Sch}. This was studied in a general context (arithmetic Cohen-Macaulay and non-arithmetic Cohen-Macaulay divisors on rational normal scrolls) by \cite{Na99} or \cite{Park14}, but we could not find any application for trigonal curves\footnote{\noindent A trigonal curve is a non-hyperelliptic smooth curve $C$ of genus $g$ with a line bundle $T$ inducing a $3:1$ morphism to $\PP^1$. For $g\ge 5$, the line bundle $T$ is unique.}. In particular, for a trigonal curve $C$ of genus $g\ge 5$ with trigonal bundle $T$ the shape of the minimal free resolution implies normal generation for line bundles of the form $K_C\otimes T^{-n}$ (by our knowledge,  the latter seems to be unknown for $n=1$ and small genus or for $n\ge 2$).  We present a short and selfcontained proof (see also Remark \ref{CastelnuovoVariety}). 

We summarize known result about normal generation of line bundles on trigonal curves. Let $C$ be a curve of genus $g$. A line bundle $L$ on $C$ is said to be normally generated if $L$ is very ample and the embedded curve $C\subset \PP(H^0(C,L)^*)$ is projectively normal (equivalently, the natural maps $S^m H^0(C,L) \to H^0(C,L^m)$ are surjective for all $m\ge 0$).
In their seminal paper \cite{GL86}, Green and Lazarsfeld prove that a very ample line bundle $L$ of degree $2g-k$ on $C$ is normally generated as long as $k$ is smaller than the Clifford index $c$ of $C$ \footnote{For $g\ge 4$ the Clifford index is defined as $\min\{\deg L - 2\cdot h^0(C,L) -2\ :\ L\in \Pic(C), h^i(C,L)\ge 2, i=0,1\}$.}. For our purpose the following result is important. 
\begin{theorem*}{\cite[Theorem 2]{GL86}}
 There exists an explicit constant $N(c)$ such that if $C$ is a curve of Clifford index $c$ and of genus $g > N(c)$, then every very ample line bundle $L$ with $h^1(C,L)\ge 2$ of degree $\deg (L) = 2g - 2\cdot h^1(C,L) - c$ is normally generated. 
\end{theorem*}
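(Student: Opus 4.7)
The plan is to translate normal generation into a cohomological vanishing and deduce from its failure a line bundle on $C$ whose Clifford index is strictly smaller than $c$, contradicting the hypothesis. Concretely, $L$ is normally generated iff the multiplication $\mu\colon H^0(L)\otimes H^0(L)\to H^0(L^{\otimes 2})$ is surjective, which in turn, via the kernel bundle sequence $0\to M_L\to H^0(L)\otimes\OO_C\to L\to 0$ tensored by $L$, is equivalent to $H^1(C,M_L\otimes L)=0$. The starting point is to assume this $H^1$ is non-zero and to extract geometric information from it.

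To convert the cohomological non-vanishing into a divisorial statement I would apply the ``$H^1$-lemma'' of Green and Lazarsfeld: if $\mu$ fails to be surjective there is an effective divisor $E$ on $C$ satisfying $h^0(L(-E))\ge h^0(L)-1$, so $E$ imposes essentially one condition on $|L|$. A direct Riemann-Roch computation then yields $h^1(L(-E))=h^1(L)+\deg E-1\ge 2$ and
\[
\operatorname{Cliff}(L(-E)) \;=\; \operatorname{Cliff}(L)+2-\deg E \;=\; c+2-\deg E,
\]
so $L(-E)$ is a genuine contender for the Clifford index. If one can guarantee $\deg E \ge 3$, this already violates the definition of $c$ and the proof is complete; when the argument only outputs $\deg E\in\{1,2\}$, a sharper base-point-free pencil or secant argument is needed to produce instead a pencil of Clifford index strictly less than $c$.

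The hypothesis $g > N(c)$ enters precisely when closing these borderline cases: for $g$ sufficiently large with respect to $c$, the numerical shape $\deg L = 2g - 2h^1(L) - c$ is incompatible with the degenerate configurations (trisecant pencils, very short secant loci, etc.) that would keep $\deg E$ at $1$ or $2$. The explicit constant $N(c)$ should emerge as the threshold beyond which Martens-type and Mumford-type bounds on special divisors exclude these accidents.

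The main obstacle will be the second step, the production of $E$ with small $\deg E$ and large $h^0(L(-E))$. Concretely, one has to carefully dissect the kernel bundle $M_L$, or dually, via Serre duality, a non-zero homomorphism $M_L\to K_C\otimes L^{-1}$: either one exhibits a destabilizing line sub-bundle of $M_L$, or one forces $K_C\otimes L^{-1}$ to carry an unexpected pencil. In either case one recovers the divisor $E$. Bookkeeping the simultaneous contributions of $h^1(L)$, $c$ and $g$ in this dissection is the delicate point, and is exactly what forces a quantitative bound $g > N(c)$ rather than a uniform statement.
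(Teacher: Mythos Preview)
This theorem is merely \emph{cited} in the paper from \cite{GL86}; the paper gives no proof of it and uses it only as background (to motivate the gap $7\le g\le 16$ that Theorem~\ref{mainThm} closes). There is therefore no ``paper's own proof'' to compare your sketch against.

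That said, two remarks on your outline as a sketch of the Green--Lazarsfeld argument itself. First, your opening equivalence is not quite right: surjectivity of $H^0(L)\otimes H^0(L)\to H^0(L^{\otimes 2})$, i.e.\ the vanishing $H^1(M_L\otimes L)=0$, gives only the $m=2$ case of projective normality. One still needs the maps $H^0(L)\otimes H^0(L^{\otimes k})\to H^0(L^{\otimes(k+1)})$ for $k\ge 2$; under the standing degree hypothesis these are handled by a separate (easier) argument, but you should not fold this into a single ``iff''. Second, the mechanism you describe---extracting from a nonzero $H^1(M_L\otimes L)$ a line bundle of smaller Clifford index, and using $g>N(c)$ to rule out the degenerate low-degree secant configurations---is indeed the shape of the argument in \cite{GL86}, so the plan is on the right track even if the details (precise form of the $H^1$-lemma, the exact case analysis for $\deg E\in\{1,2\}$, and the value of $N(c)$) remain to be filled in.
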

They furthermore showed the existence of non-special line bundles which are not normally generated, and classified not normally generated line bundles with $h^1(C,L)=1$. 

\begin{example}
 For a trigonal curve $C$ ($\Rightarrow$ Clifford index $1$) let $T$ be the trigonal bundle. We set $L=K_C\otimes T^{-1}$ in the above theorem. Then $K_C\otimes T^{-1}$ is normally generated if $g > N(1) = 16$. We could not find any further result in the literature concerning the normal generation of the line bundle $K_C\otimes T^{-1}$ for trigonal curves of genus $16\ge g\ge 7$ (the low genus cases are trivial by a B\'ezout argument). We close the remaining gap in the genus with our theorem and furthermore describe the whole minimal free resolution of $C\subset \PP(H^0(C,K_C\otimes T^{-1})^*)$.
\end{example}

Given a projective variety $Y\subset \PP^n$, we can resolve the structure sheaf of $Y$ by free $\OO_{\PP^n}$-modules, that is, a minimal free resolution 
$$
\OO_Y \leftarrow F_0 \leftarrow F_1\leftarrow \dots \leftarrow F_m \leftarrow 0,
$$
where $F_i = \bigoplus_j \OO_{\PP^n}(-j)^{\beta_{ij}}$. We call the numbers $\beta_{ij}$ the \emph{Betti numbers} of $Y$. The shape of the minimal free resolution will be encoded in the so-called \emph{Betti table} $(\beta_{ij})_{ij}$ (see \cite{Eis} for an introduction). 

Our main theorem is the following (see also Theorem \ref{genMainThm} for its generalization). 

\begin{theorem}\label{mainThm}
 Let $C$ be a trigonal curve of genus $g\ge 7$ and let $T$ be the unique trigonal bundle inducing the map $\pi: C \stackrel{3:1}{\longrightarrow} \PP^1$. The Betti table of a minimal free resolution of $C\subset \PP(H^0(C,K_C\otimes T^{-1})^*)=\PP^{g-3}$ has the following shape 
$$
\begin{tabular}{c|cccccccc}
 \multicolumn{1}{@{}}{\ }&& \\[-2mm] 
  & $0$ & $1$ & $\dots$ &  $g-6$ & $g-5$ & $g-4$  \\ \hline 
$0$ & 1 & 0 & $\cdots$ & 0 & 0 & 0 \\
$1$ & 0 & * & $\cdots$ & * & * & 0\\
$2$ & 0 & * & $\cdots$ & * & 0 & 0 \\
$3$ & 0 & 0 & $\cdots$ & 0 & * & *
 \\[-2mm]
 \multicolumn{1}{@{}}{\ }&&
\end{tabular}
$$
 where $*$ indicates nonzero entries. In particular, the embedded curve is projectively normal. 
\end{theorem}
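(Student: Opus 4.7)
The plan, following Schreyer's method, is to realize $C\subset\PP^{g-3}$ as a divisor on a $2$-dimensional rational normal scroll $X\subset\PP^{g-3}$ and compute the syzygies of $\OO_C$ from the mapping cone of a lift of
\[
0\to\OO_X(-C)\to\OO_X\to\OO_C\to 0
\]
to minimal free resolutions over $S=\OO_{\PP^{g-3}}$. I would first produce the scroll: since $h^0(K_C-D)=g-2$ for each $D\in|T|$, the trine $\varphi_{K_C}(D)$ spans only a line in $\PP^{g-1}$, and the union of these lines sweeps out the classical trigonal scroll $\widetilde X\subset\PP^{g-1}$ of degree $g-2$. Linear projection from one such fiber line is tautologically $\varphi_{|K_C-D|}=\varphi_{K_C\otimes T^{-1}}$, and it sends $\widetilde X$ onto a rational normal scroll $X\subset\PP^{g-3}$ of degree $g-4$. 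Writing $\Pic(X)=\ZZ H\oplus\ZZ R$, the intersection numbers $C\cdot H=2g-5$ and $C\cdot R=3$ force $C\sim 3H-(g-7)R$ on $X$.

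Second, I would assemble the two resolutions. The minimal resolution of $\OO_X$ over $S$ is the Eagon-Northcott complex of the $2\times(g-4)$ matrix of linear forms cutting out $X$: it has length $g-5$, contributes purely to row $1$ of the Betti table, with $\beta_{i,i+1}^X=i\binom{g-4}{i+1}$ for $1\le i\le g-5$. The minimal resolution of $\OO_X(-C)=\OO_X(-3H+(g-7)R)$ over $S$ is extracted from the graded module $\bigoplus_k H^0(\OO_X(kH+(g-7)R))$ (shifted by $-3$), using the presentation $X=\PP(\cE)$ with $\cE$ a rank-$2$ bundle of degree $g-4$ on $\PP^1$ and the scrollar Eagon-Northcott-type complexes of \cite{Sch,Na99,Park14}. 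The outcome is a length-$(g-5)$ resolution whose Betti numbers, in the eventual Betti table of $\OO_C$, populate only rows $2$ and $3$: a linear strand in row $2$ over columns $1,\dots,g-6$ and a two-term \emph{tail} in row $3$ over columns $g-5,g-4$. Since the Eagon-Northcott contributions (row $1$, columns $1,\dots,g-5$) and the $\OO_X(-C)$-contributions (rows $2$ and $3$, in the ranges above) occupy disjoint $(i,j)$-positions in the mapping cone, no cancellation can occur; the cone is already the minimal resolution with the claimed Betti shape.

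The main obstacle is producing the minimal resolution of $\OO_X(-3H+(g-7)R)$ over $S$ explicitly. Since the $H$-degree is negative, the standard Eagon-Northcott does not apply directly, and one must invoke Schreyer's $H$-Koszul variant (or the divisor-on-scroll complexes of \cite{Na99,Park14}) together with the relative duality $\omega_{X/\PP^1}\cong\OO_X(-2H+(g-6)R)$ to pin down the exact Betti-row placement. Granted this, the displayed Betti table immediately gives $\beta_{i,0}=0$ for $i\ge 1$ and $\beta_{0,j}=0$ for $j\ge 1$, so that $C\subset\PP^{g-3}$ is both linearly normal and arithmetically Cohen-Macaulay, hence projectively normal.
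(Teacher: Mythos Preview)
Your proposal is correct and follows essentially the same route as the paper: realize $C$ on a rational normal surface scroll $X\subset\PP^{g-3}$, identify $[C]=3H-(g-7)R$, and take the mapping cone of the Eagon--Northcott type resolutions of $\OO_X$ and $\OO_X(-C)$; since the two strands occupy disjoint Betti positions the cone is already minimal.

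One clarification on what you flag as the ``main obstacle'': there is no obstacle, and no need for relative duality. For any $b\ge -1$ the Buchsbaum--Eisenbud complex $\cC^b$ resolves $\OO_{\PPE}(bR)$ over $\OO_{\PP^{g-3}}$, and a twist by $\OO_{\PP^{g-3}}(a)$ resolves $\OO_{\PPE}(aH+bR)$ regardless of the sign of $a$. Here $a=-3$ and $b=g-7\ge 0$, so the resolution of $\OO_X(-C)$ is simply $\cC^{g-7}\otimes\OO_{\PP^{g-3}}(-3)$. The ``negative $H$-degree'' is just this harmless twist; the jump from row $2$ to row $3$ that you correctly locate at columns $g-5$, $g-4$ comes from the quadratic differential $\bigwedge^2\Phi$ in $\cC^{g-7}$ at step $j=b+1=g-6$, which after the homological shift by $1$ in the mapping cone lands exactly where you claim. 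Also, the paper constructs $X$ directly in $\PP^{g-3}$ as the union of the spans $\overline D$ for $D\in|T|$ (using $h^0(K_C\otimes T^{-2})=g-4$ to see these are lines), rather than by projecting the canonical scroll; both give the same surface.
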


\begin{remark}\label{CastelnuovoVariety}
Given a trigonal curve $C$ of genus $g$ and a positive number $n$, we provide a sharp range for $g$ and $n$ where $K_C\otimes T^{-n}$ is very ample in Section \ref{veryAmpleness}. Hence the embedded curve  $C\subset \PP(H^0(C,K_C\otimes T^{-n})^*)$ is smooth. One can show that projective normality of $K_C\otimes T^{-n}$ follows already from the classical Castelnuovo lemma \cite{Cas1889}. We note that trigonal curves $C\subset \PP(H^0(C,K_C\otimes T^{-n})^*)$ are extremal in the sense of Castelnuovo's bound and refer to \cite[Chapter III, \S 2, pp  120]{ACGH} for a detailed analysis.  Furthermore the statement of Theorem \ref{mainThm} also follows from \cite[Theorem 2.4 and Corollary 2.5]{Na99} using the smoothness assumption. We recall that we present a new selfcontained proof where our method of the proof is exactly valid in the range for $g$ and $n$ where $K_C\otimes T^{-n}$ is very ample. 
\end{remark}

In \cite{MS86}, Martens and Schreyer classified non-special not normally generated line bundles on trigonal curves. They used the extrinsic geometry of the embedding $C\subset \PP(H^0(C,L)^*)$ depending on the Maroni-invariant (see Section \ref{assScrollToT} for the definition). 

\begin{remark}
 In \cite{LN13}, Lange and Newstead determined all vector bundles on trigonal curves which compute all higher Clifford indices (see for the definition e.g., \cite{LN10}). Their main theorem \cite[Theorem 4.7]{LN13} relies on the normal generation of $K_C\otimes T^{-1}$ and is thus true for all trigonal curves of genus $g\ge 5$ by our main theorem. 
\end{remark}

\begin{remark}
 The nonzero Betti numbers in Theorem \ref{mainThm} are: 
 \begin{align*}
  & \beta_{j,j+1} = j \cdot \binom{g-4}{j+1}, &  \text{for }& 1 \le j \le g - 5\ \  \text{(first row)} \cr
  & \beta_{j,j+2} = (g - 5 - j) \cdot \binom{g-4}{j-1}, &  \text{for }& 1 \le j \le g - 6\ \ \text{(second row)} \cr
  & \beta_{j,j+3} = (j - g + 6) \cdot \binom{g-4}{j}, & \text{for } & g-5 \le j \le g-4\ \ \text{(third row)}
 \end{align*}
 These are exactly the numbers as in \cite[Theorem 2.4]{Na99}
\end{remark}

In Section \ref{proofMainThm} we present the proof of Theorem \ref{mainThm} and introduce the necessary background. In Section \ref{higherResiduals} we give a bound for the very ampleness of $K_C\otimes T^{-n}$ in terms of $n$ and the genus of the trigonal curve $C$. At the end we proof the generalization of Theorem \ref{mainThm}.

%%%%%%%%%%%%%%%%%%%%%%
\section{Proof of the main theorem}\label{proofMainThm}
%%%%%%%%%%%%%%%%%%%%%%
Our proof follows the strategy of \cite{Sch}: 
\begin{itemize}
 \item $C\subset \PP(H^0(C,K_C\otimes T^{-1})^*)$ lies on a rational normal surface $X$ swept out by $T$, 
 \item resolve $\OO_C$ as an $\OO_X$-module by direct sums of line bundles on $X$ and take the minimal free resolution of each of these line bundles as $\OO_{\PP^{g-3}}$-modules,
 \item a mapping cone construction induces a minimal free resolution of $\OO_C$ as an $\OO_{\PP^{g-3}}$-module.
\end{itemize}

\subsection{A rational normal surface associated to the trigonal bundle}\label{assScrollToT}

Let $C$ be a trigonal curve of genus $g\ge 5$. An important invariant associated to any trigonal curve - classically used to describe its Brill--Noether locus - is the so-called \emph{Maroni-invariant}. It is defined as follows. Let $\pi:C\to \PP^1$ be the map associated to the trigonal bundle $T$. The direct image $\pi_* K_C$ of the canonical bundle on $C$ splits into a direct sum of line bundles
$$
\pi_* K_C = \OO_{\PP^1}(a) \oplus \OO_{\PP^1}(m) \oplus \OO_{\PP^1}(-2) 
$$
where $m$ is the \emph{Maroni-invariant}. It is known that  
\begin{align}\label{MaroniInequalities}
 0< \frac{g-4}{3}\le m \le \frac{g-2}{2}. 
\end{align}
and $a = g-2-m$. Furthermore, $m+2$ and $a+2$ describe the two jumps of the function $f:\NN \to \{0,1,2\}$ with $f(n):=h^1(C,T^{n-1}) - h^1(C, T^n)$. Hence, 
\begin{align}\label{MaroniAsMin}
 m+2 = \text{min}\{n\in \NN\ :\ h^0(C,T^n) > n+1\}.
\end{align}
Maroni \cite{Mar46} introduced $m$ as a geometrical invariant determining a smooth rational normal scroll which is swept out by the trigonal bundle in the canonical space $\PP(H^0(C,K_C)^*)$. 

We will explain this procedure for our purpose in the following setting. We assume that $g\ge 6$. 
Let $C\subset \PP^{g-3} :=  \PP(H^0(C,K_C\otimes T^{-1})^*)$ be embedded by the very ample linear series $|K_C\otimes T^{-1}|$ (see Section \ref{veryAmpleness} and note that for $g=5$, $K_C\otimes T^{-1}$ maps $C$ to a plane quintic with one node). We consider the scroll swept out by the unique pencil $|T|$, that is,  
$$
X= \bigcup_{D\in |T|} \overline{D} \subset \PP^{g-3}
$$
where $\overline{D}$ is the linear span of the divisor $D$ defined by the linear forms 
$$H^0(C, K_C\otimes T^{-1}(-D)) \to H^0(C,K_C\otimes T^{-1}).$$ 
Since $H^0(C,K_C\otimes T^{-2})$ is $(g-4)$-dimensional (by (\ref{MaroniInequalities}) and (\ref{MaroniAsMin}) for $g\ge 6$), the linear span $\overline{D}$ is a line and $X$ is a rational normal surface of degree $g-4$. In particular, $X$ is the image of a projective bundle $\PP(\OO_{\PP^1}(e_1)\oplus \OO_{\PP^1}(e_2))$ where $e_1 = g - 3 - m$ and $e_2 = m - 1\ge 0$. We will explain this fact in the next section. 

\subsection{Resolution of the curve on the rational normal surface}\label{resOnScroll}

Let $C$ be a trigonal curve with Maroni-invariant $m$. Let $X$ be the rational normal surface of degree $g-4$ as above and let $\pi:\cE = \OO_{\PP^1}(e_1)\oplus \OO_{\PP^1}(e_2)\to \PP^1$ be the rank two bundle on $\PP^1$ with $e_1 = g - 3 - m$ and $e_2 = m - 1$. By \cite{H}, if $e_1,e_2>0$, then 
$$j:\PP(\cE)\to X\subset \PP H^0(\PPE,\OO_\PPE(1))= \PP^{g-3}$$ 
is an isomorphism. Otherwise it is a resolution of singularities. Since $R^i j_* \OO_\PPE=0$, it is convenient to consider $\PPE$ instead of $X$ for cohomological considerations. Note that this may happen for $g=6, 7$ and $m=1$. 
 \\
It is furthermore known that the Picard group $\Pic(\PPE)$ is generated 
by the ruling $R=[\pi^*\OO_{\PP^1}(1)]$ and the hyperplane class
$H=[j^*\OO_{\PP^{g-3}}(1)]$ with intersection products
$$
H^2=e_1 + e_2 = g-4, \ \ H\cdot R=1, \ \ R^2=0.
$$
Since $C\subset X$ and $T$ is base point free, we consider $C\subset \PPE$ as a subvariety of the projectivised bundle. Note that 
$$\OO_\PPE(H)\otimes \OO_C = K_C\otimes T^{-1} \text{ and } \OO_\PPE(R)\otimes \OO_C = T.$$ 

Since the curve $C$ is a codimension one subvariety of $\PPE$, the ideal sheaf $\cI_{C/\PPE}$ is generated by an element in $H^0(\PPE, \OO_\PPE(aH-bR))$. Since the ruling on $C$ has degree $3$ ($\Rightarrow$ $a=3$) and the degree of $C\subset \PP^{g-3}$ is $\deg(C)=2g-5$ ($\Rightarrow$ $b=g-7$), the resolution of $\OO_C$ as an $\OO_\PPE$-module is
\begin{align}\label{relRes}
 \begin{xy}
  \xymatrix{
  0 & \ar[l] \OO_C & \ar[l] \OO_\PPE & \ar[l] \OO_\PPE(-3H + (g-7)R) & \ar[l] 0 \\
  }
 \end{xy}
\end{align} 

In the next step we will resolve the line bundles in the above resolution in terms of $\OO_{\PP^{g-3}}$-modules. Therefore we recall the definition of \emph{Eagon--Northcott} type resolutions whereby we restrict to our case. 
 \\
We have a natural multiplication map
$$H^0(\PPE,\OO_\PPE(R))\otimes H^0(\PPE,\OO_\PPE(H-R))\longrightarrow H^0(\PPE,\OO_\PPE(H)),$$
and the equations of the rational normal surface $X$ are given by the $2\times 2$-minors of the matrix $\Phi$. We define 
$$
F:= H^0(\PPE,\OO_\PPE(H-R))\otimes \OO_{\PP^{g-3}}= \OO_{\PP^{g-3}}^{g-4},\ 
G:=H^0(\PPE,\OO_\PPE(R))\otimes \OO_{\PP^{g-3}}= \OO_{\PP^{g-3}}^2
$$
and regard $\Phi$ as a map $\Phi: F(-1)\to G$ (where we identify $G=G^*$). 

For $b\ge -1$ we can resolve $\OO_\PPE(aH+bR)$ as an $\OO_{\PP^{g-3}}$-module by an Eagon-Northcott type complex $\cC^b\otimes \OO_{\PP^{g-3}}(a)$ where the $j^{th}$ term of $\cC^b_j$ is defined as
$$
\cC^b_j=\begin{cases}\bigwedge^j F\otimes S_{b-j}G\otimes \OO_\PPr(-j)\ &{\text{for} \ 0 \leq j\leq b}\\ 
\bigwedge^{j+1}F\otimes D_{j-b-1}G^*\otimes  \OO_\PPr(-j-1) &{\text{for}\ j\geq b+1\ \ }\end{cases}
$$
and whose differentials $\delta_j:\cC^b_j\longrightarrow\cC^b_{j-1}$ are given by the multiplication by
\begin{align*}
&\Phi\in H^0(\PPE, F^*\otimes G) & \text{for } j\neq b+1 \\ 
&\bigwedge^2\Phi \in H^0(\PPE, \wedge^2 F^*\otimes \wedge^2 G) & \text{for }  j=b+1
\end{align*}
(see \cite{BE75}).  
The resolution in (\ref{relRes}) induces a double complex of Eagon--Northcott type resolutions 
\begin{align}\label{doubleComplex}
 \cC^0 \leftarrow \cC^{g-7}\otimes \OO_{\PP^{g-3}}(-3).
\end{align}

\subsection{Mapping cone construction and examples}\label{mappingCone}

Since all maps in the double complex in (\ref{doubleComplex}) are minimal, the mapping cone $[\cC^0 \leftarrow \cC^{g-7}\otimes \OO_{\PP^{g-3}}(-3)]$ of this double complex induces a minimal free resolution of $C\subset \PP^{g-3}$ (see \cite{Eis}). 

\begin{example}
 For a trigonal curve of genus $g=7$ the double complex is
$$
\begin{xy}
 \xymatrix{
 0 & \ar[l] \OO_C & \ar[l] \OO_\PPE & \ar[l] \OO_\PPE(-3H) & \ar[l] 0 \\
  & & \ar[u] \OO_{\PP^4} & \ar[u] \ar[l] \OO_{\PP^4}(-3) & \\  
  & & \ar[u] \bigwedge^2 F\otimes \OO_{\PP^4}(-2) & \ar[u] \ar[l] \bigwedge^2 F\otimes \OO_{\PP^4}(-5) & \\
  & & \ar[u]  \bigwedge^3 F\otimes DG^* \otimes \OO_{\PP^{4}}(-3) & \ar[u] \ar[l]  \bigwedge^3 F\otimes DG^* \otimes \OO_{\PP^{4}}(-6) & \\
  & & \ar[u] 0 &  \ar[u] 0 &
 }
\end{xy}
$$
In the mapping cone construction we sum up the terms on the diagonal in the double complex (with the right differentials). This yields 
$$
0\leftarrow \OO_C \leftarrow \OO_{\PP^4} \longleftarrow 
\begin{matrix} \bigwedge^2 F\otimes \OO_{\PP^4}(-2) \\ \oplus \\ \OO_{\PP^4}(-3) \end{matrix} \longleftarrow 
\begin{matrix} \bigwedge^3 F\otimes DG^* \otimes \OO_{\PP^{4}}(-3) \\ \oplus \\ \bigwedge^2 F\otimes \OO_{\PP^4}(-5)\end{matrix} 
\leftarrow \bigwedge^3 F\otimes DG^* \otimes \OO_{\PP^{4}}(-6) \leftarrow 0,
$$
 and the Betti table of the minimal free resolution of $C\subset \PP^4$ is 
$$
\begin{tabular}{c|ccccc}
 \multicolumn{1}{@{}}{\ }&& \\[-2mm] 
  & $0$ & $1$ & $2$ &  $3$   \\ \hline 
$0$ & 1 & 0 & 0 & 0 \\
$1$ & 0 & 3 & 2 & 0 \\
$2$ & 0 & 1 & 0 & 0 \\
$3$ & 0 & 0 & 3 & 2 
 \\[-2mm]
 \multicolumn{1}{@{}}{\ }&&
\end{tabular}
$$
\end{example}

For $g\ge 8$ the construction of Section \ref{resOnScroll} yields the following double complex 
$$
\begin{xy}
 \xymatrix{
 0 & \ar[l] \OO_C & \ar[l] \OO_\PPE & \ar[l] \OO_\PPE(-3H + (g-7)R) & \ar[l] 0 \\
  & & \ar[u] \OO_{\PP^{g-3}} & \ar[u] \ar[l] S_{g-7}G\otimes \OO_{\PP^{g-3}}(-3) & \\  
  & & \ar[u] \bigwedge^2 F\otimes \OO_{\PP^{g-3}}(-2) & \ar[u] \ar[l] F\otimes S_{g-8}G\otimes \OO_{\PP^{g-3}}(-4) & \\
  & & \ar[u]  \bigwedge^3 F\otimes DG^* \otimes \OO_{\PP^{g-3}}(-3) & \ar[u] \vdots & \\
  & & \ar[u] \vdots & &
 }
\end{xy}
$$
and the mapping cone induces the minimal free resolution  
$$
0\leftarrow \OO_C \leftarrow \OO_{\PP^{g-3}} \longleftarrow 
\begin{matrix} \bigwedge^2 F\otimes \OO_{\PP^{g-3}}(-2) \\ \oplus \\ S_{g-7}G\otimes \OO_{\PP^{g-3}}(-3) \end{matrix} \longleftarrow 
\begin{matrix} \bigwedge^3 F\otimes DG^* \otimes \OO_{\PP^{g-3}}(-3) \\ \oplus \\ F\otimes S_{g-8}G\otimes \OO_{\PP^{g-3}}(-5)\end{matrix} 
\longleftarrow \dots
$$
The shape of the minimal free resolution of $C$ as an $\OO_{\PP^{g-3}}$-module is given as stated in the main theorem. Indeed, we note that the second last map in $\cC^{g-7}\otimes \OO_{\PP^{g-3}}(-3)$ is of degree $2$ and occurs in the $(g-6)^{th}$ step in the mapping cone construction. The theorem follows.

\begin{example}
We end this section with a trigonal curve $C$ of genus $6$. The minimal free resolution of $C\subset \PP^3 = \PP(H^0(C,K_C\otimes T^{-1})^*)$ has the following shape 
$$
\begin{tabular}{c|cccc}
 \multicolumn{1}{@{}}{\ }&& \\[-2mm] 
  & $0$ & $1$ & $2$    \\ \hline 
$0$ & 1 & 0 & 0  \\
$1$ & 0 & 1 & 0  \\
$2$ & 0 & 0 & 0  \\
$3$ & 0 & 2 & 0  \\
$4$ & 0 & 0 & 1
 \\[-2mm]
 \multicolumn{1}{@{}}{\ }&&
\end{tabular}
$$
We can still apply the above method to compute a minimal free resolution. Note that $C$ has quartic generators induced by $\cC^{-1}\otimes \OO_{\PP^3}(-3)$.
\end{example}

%%%%%%%%%%%%%%%%%%%%%%
\section{Higher residuals of $T$ with respect to the canonical bundle}\label{higherResiduals}
%%%%%%%%%%%%%%%%%%%%%%

In this section we extend Theorem \ref{mainThm} to trigonal curves of genus $g$ embedded by line bundles of the form $K_C\otimes T^{-n}$ for $n\ge 1$ and $g\ge 3n+4$.  
We fix the notation of this section. For a trigonal curve $C$ of genus $g\ge 5$ let $T$ be the unique trigonal bundle and let $m$ be the Maroni-invariant (and $a=g-2-m$). 

\subsection{Very ampleness of $K_C\otimes T^{-n}$}\label{veryAmpleness}

We have the following two lemmata. 

\begin{lemma}
 For $n > m$ the line bundle $K_C\otimes T^{-n}$ does not separate points of the morphism $C\stackrel{3:1}{\longrightarrow} \PP^1$ induced by $T$. In particular, $K_C\otimes T^{-n}$ is not very ample. 
\end{lemma}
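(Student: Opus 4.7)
The plan is to prove, for any fiber $D\in |T|$ of $\pi$, that the three points of $D$ are sent to the same point in $\PP(H^0(C,K_C\otimes T^{-n})^*)$ by the morphism induced by $|K_C\otimes T^{-n}|$. Since a generic such fiber consists of three distinct points, this already forbids very ampleness.

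First I would use the identification $\OO_C(-D)\cong T^{-1}$ to obtain the short exact sequence
\begin{equation*}
0\longrightarrow K_C\otimes T^{-n-1}\longrightarrow K_C\otimes T^{-n}\longrightarrow (K_C\otimes T^{-n})|_D\longrightarrow 0.
\end{equation*}
The dimension of the linear span of the image of $D$ equals the rank of the evaluation map $H^0(K_C\otimes T^{-n})\to H^0((K_C\otimes T^{-n})|_D)$ minus one, and the long exact sequence in cohomology bounds this rank by $h^0(K_C\otimes T^{-n})-h^0(K_C\otimes T^{-n-1})$.

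Next I would compute both dimensions via the projection formula. From $T=\pi^*\OO_{\PP^1}(1)$ and $\pi_*K_C=\OO_{\PP^1}(a)\oplus\OO_{\PP^1}(m)\oplus\OO_{\PP^1}(-2)$, one obtains
\begin{equation*}
\pi_*(K_C\otimes T^{-n})=\OO_{\PP^1}(a-n)\oplus\OO_{\PP^1}(m-n)\oplus\OO_{\PP^1}(-n-2).
\end{equation*}
For $n>m$ only the first summand can possibly contribute to $h^0(K_C\otimes T^{-n})$ or $h^0(K_C\otimes T^{-n-1})$, since both $m-n$ and $m-n-1$ are negative and so is $-n-2$. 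A short case split on $n\leq a-1$, $n=a$, $n>a$ then shows that $h^0(K_C\otimes T^{-n})-h^0(K_C\otimes T^{-n-1})\leq 1$ in every case. Consequently the linear span of the image of $D$ has dimension at most $0$ (or the linear series is too small to define an embedding at all, in the range $n>a$ where $h^0(K_C\otimes T^{-n})\leq 1$), so the entire fiber $D$ is contracted to a single point.

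I do not expect any real obstacle: the heart of the argument is the projection formula combined with the fact that a fiber of $\pi$ is cut out by $T$. The only item that needs care is the bookkeeping of when the summands of $\pi_*(K_C\otimes T^{-n})$ lose their global sections, which is controlled by the recalled bounds $0<(g-4)/3\leq m\leq a=g-2-m$.
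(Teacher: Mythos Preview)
Your proof is correct and shares the paper's core idea: for a fiber $D\in|T|$, bound the rank of the restriction $H^0(K_C\otimes T^{-n})\to H^0((K_C\otimes T^{-n})|_D)$ by $1$, so that the fiber is collapsed and very ampleness fails. The paper reduces to $n=m+1$, passes by Serre duality to $h^1(T^{m+1})-h^1(T^{m+2})$, and invokes the jump characterization of the Maroni invariant to get the bound; you instead compute $h^0(K_C\otimes T^{-n})$ directly from the projection formula and the splitting of $\pi_*K_C$, handling all $n>m$ uniformly without the reduction step. Both routes are short; yours is a bit more explicit and avoids the duality detour, while the paper's stays closer to the intrinsic description of $m$ already set up in the text.
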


\begin{proof} 
We may assume that $n=m+1$ since $H^0(C,K_C\otimes T^{-n_1})\subset H^0(C,K_C\otimes T^{-n_2})$ for $n_1\ge n_2$. 
Let $D = p+q+r \in |T|$ be a divisor. There is a short exact sequence 
$$
0 \to T^{m+1} \to T^{m+1}(D) \to T^{m+1}(D)|_D \to 0.
$$
The long exact sequence 
$$
0 \to H^0(C,T^{m+1}) \to H^0(C,T^{m+2}) \to \underbrace{\Gamma(T^{m+1}(D)|_D)}_{3-\text{dimensional}}\to H^1(C,T^{m+1}) \to H^1(C,T^{m+2}) \to 0
$$
is induced by the global section functor. Since $m+2=\min\{n\in \NN\ :\ h^0(C,T^n)>n+1\}$, the difference is 
$$
h^0(C,T^{m+2})-h^0(C,T^{m+1})\ge 2.
$$
Hence, by the long exact sequence $h^1(C,T^{m+1})-h^1(C,T^{m+2})\le 1$ and for $p,q \in \Supp(D)$ 
\begin{align*}
 h^0(C,K_C\otimes T^{-m-1})-h^0(C,K_C\otimes T^{-m-1}(-p-q)) 
 & \le  h^0(C,K_C\otimes T^{-m-1})-h^0(C,K_C\otimes T^{-m-2}) \\
 & =  h^1(C,T^{m+1})-h^1(C,T^{m+2}) \\ 
 & \le  1
\end{align*}
Thus, we can not find a section separating the point $p$ and $q$. The lemma follows.
\end{proof}

\begin{lemma}{\cite[V, Theorem 2.17]{Ha} or \cite[Lemma 1, p. 176]{MS86}}\label{higherResVeryAmple}
 For $n < m$ the line bundle $K_C\otimes T^{-n}$ on $C$ is very ample. The line bundle $K_C\otimes T^{-m}$ is generated by global sections and separates points of the morphism $C\stackrel{3:1}{\longrightarrow} \PP^1$ induced by $T$.
\end{lemma}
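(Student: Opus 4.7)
The plan is to reduce both assertions to the same cohomological statement---namely, the computation of $h^{0}(C,T^{n}(D))$ for small effective divisors $D$---via Riemann--Roch and Serre duality. The crucial input is the splitting
\[
\pi_{*}\OO_{C} \;=\; \OO_{\PP^{1}} \,\oplus\, \OO_{\PP^{1}}(-a-2) \,\oplus\, \OO_{\PP^{1}}(-m-2),
\]
which I would derive from the decomposition of $\pi_{*}K_{C}$ recalled in Section~\ref{assScrollToT} by applying Grothendieck duality to the finite flat morphism $\pi$. Together with (\ref{MaroniAsMin}) this gives $h^{0}(T^{k})=k+1$ and $H^{0}(T^{k})=\pi^{*}H^{0}(\PP^{1},\OO(k))$ for every $k\le m+1$.

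Next, applying Riemann--Roch and Serre duality to $L=K_{C}\otimes T^{-n}$ yields the identity
\[
h^{0}(L) - h^{0}(L(-D)) \;=\; \deg D \,+\, h^{0}(T^{n}) \,-\, h^{0}(T^{n}(D)).
\]
I would use this to recast the goal: $L$ is very ample precisely when $h^{0}(T^{n}(D)) = n+1$ for every effective $D$ of degree $2$, while $L$ is base-point-free and separates the three points of every $\pi$-fibre precisely when the same equality holds for every effective $D$ of degree at most $2$ which is supported in a single fibre. The proof then proceeds by case analysis on whether $D$ is contained in a fibre of $\pi$.

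In the first case, $D\le F$ for some $F\in|T|$, I would use the short exact sequence
\[
0\;\to\; T^{n}(D) \;\to\; T^{n+1} \;\to\; \OO_{F-D} \;\to\; 0,
\]
together with the observation that every section of $T^{n+1}$ pulls back from $\PP^{1}$ and hence is constant on $F$; this forces the evaluation $H^{0}(T^{n+1})\to H^{0}(\OO_{F-D})$ to have rank $1$, giving $h^{0}(T^{n}(D))=(n+2)-1=n+1$. Since this step needs only $n+1\le m+1$, it already proves the second assertion of the lemma (base-point-freeness and fibre-point separation of $K_{C}\otimes T^{-m}$).

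In the second case, $D=p+q$ is not contained in any single fibre (both $\pi(p)\ne\pi(q)$ and the subcase $p=q$ at an unramified point belong here). I would use the analogous sequence
\[
0\;\to\; T^{n}(D) \;\to\; T^{n+2} \;\to\; \OO_{F_{p}+F_{q}-D} \;\to\; 0,
\]
where $F_{p},F_{q}\in|T|$ are the fibres through $p,q$ and the cokernel has length $4$. Under the stronger hypothesis $n+2\le m+1$ sections of $T^{n+2}$ are again pullbacks from $\PP^{1}$, and a short calculation of values (and, in the ramified or coincident subcases, of first derivatives along the fibre direction) shows that the image of the evaluation map is the $2$-dimensional diagonal subspace, whence $h^{0}(T^{n}(D))=(n+3)-2=n+1$. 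Combined with the first case, this proves very ampleness of $L$ for $n<m$. The main obstacle I foresee is precisely this rank-$2$ computation: it is what makes the bound $n<m$ sharp, since at $n=m$ the space $H^{0}(T^{m+2})$ acquires extra sections from the summand $\OO_{\PP^{1}}(-m-2)$ of $\pi_{*}\OO_{C}$ that do not factor through $\pi$, and this is exactly what prevents extending very ampleness to $K_{C}\otimes T^{-m}$.
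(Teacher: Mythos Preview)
Your argument is correct. The paper, however, supplies no proof of its own: the lemma is stated with citations to \cite[V, Theorem~2.17]{Ha} and \cite[Lemma~1, p.~176]{MS86} and left at that. The Hartshorne reference is the numerical criterion for (very) ampleness of a divisor class on a rational ruled surface, so the intended argument is extrinsic---one realises $C$ as a divisor on the Hirzebruch surface $\PP(\OO_{\PP^{1}}(a)\oplus\OO_{\PP^{1}}(m))$, identifies $K_{C}\otimes T^{-n}$ with the restriction of a suitable line bundle on the surface, and reads off the conclusion from the numerics. Your route is intrinsic to $C$: Riemann--Roch and Serre duality reduce everything to $h^{0}(T^{n}(D))=n+1$, and the splitting of $\pi_{*}\OO_{C}$ guarantees that the relevant $H^{0}(T^{k})$ consist entirely of pullbacks, which is what drives both case computations. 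This buys a self-contained proof in which the sharpness of the bound $n<m$ is transparent (Case~2 needs exactly $n+2\le m+1$).

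The one place that deserves an extra line when you write it up is the rank-$2$ claim for $D=2p$ with $p$ unramified, where $F_{p}+F_{q}=2F_{p}$: the image of $H^{0}(T^{n+2})\to H^{0}(\OO_{2F_{p}})$ has dimension $h^{0}(T^{n+2})-h^{0}(T^{n})=2$, and a pullback $\pi^{*}\sigma$ vanishing on $2(F_{p}-p)$ forces $\sigma$ to vanish to order $\ge 2$ at $\pi(p)$, hence $\pi^{*}\sigma$ vanishes on all of $2F_{p}$; so the projection to $H^{0}(\OO_{2F_{p}-2p})$ is injective on that $2$-dimensional image. With that detail made explicit, the proof is complete.
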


\begin{remark}
 For $g \ge 3m + 3$ the line bundle $K_C\otimes T^{-m}$ is very ample since $h^0(C,T^m(p+q)) = h^0(C,T^m) = m+1$. Indeed, by \cite{Mar46} or \cite[Proposition 1]{MS86} the Brill--Noether locus $W^{m+1}_{3m+2}(C)$ is empty for $g \ge 3m + 3$ and that implies $h^0(C,T^m(p+q)) = m+1$. This can only happen in few cases since $g\le 3m + 4$ always holds by the inequalities (\ref{MaroniInequalities}). 
 %The bound $g\ge 3m+3$ insures that the difference of $a = g - 2 - m$ and $m$ is as big as possible. 
 Furthermore, the bound $g\ge 3m+3$ is satisfied for our assumptions in the next section.
\end{remark}

\subsection{The minimal free resolution of $C\subset \PP(H^0(C,K_C\otimes T^{-n})^*)$}

We fix an integer $n\ge 1$. In order to ensure that the linear system $K_C\otimes T^{-n}$ on a curve $C$ of genus $g$ is very ample, we have to assume that $n$ is less than or equal to the minimal possible Maroni-invariant of a curve of genus $g$. By (\ref{MaroniInequalities}), this yields the bound $n\le \frac{g-4}{3}$, or equivalently 
$$
g\ge 3n+4.
$$

We use the same strategy as in the previous section to show that the curve $C$ is projectively normal in $\PP(H^0(C,K_C\otimes T^{-n})^*)$. We have the following generalisation of Theorem \ref{mainThm}.

\begin{theorem}\label{genMainThm}
 Let $n\ge 1$ be an integer. Let $C$ be a trigonal curve of genus $g\ge 3n+4$ and let $T$ be the unique trigonal bundle. The Betti table of a minimal free resolution of $C\subset \PP(H^0(C,K_C\otimes T^{-n})^*)=\PP^{g-2n-1}$ has the following shape 
$$
\begin{tabular}{c|cccccccc}
 \multicolumn{1}{@{}}{\ }&& \\[-2mm] 
    & $0$ & $1$ & $\dots$ & $g-3n-3$ & $g-3n-2$ & $\dots$ & $g-2n-3$ & $g-2n-2$  \\ \hline 
  $0$ & 1 & 0 & $\cdots$ & 0 & 0 & $\cdots$ & 0 & 0 \\
  $1$ & 0 & * & $\cdots$ & * & * & $\cdots$ & * & 0 \\
  $2$ & 0 & * & $\cdots$ & * & 0 & $\cdots$ & 0 & 0 \\
  $3$ & 0 & 0 & $\cdots$ & 0 & * & $\cdots$ & * & *
  \\[-2mm]
 \multicolumn{1}{@{}}{\ }&&
\end{tabular}
$$
 where $*$ indicates nonzero entries. In particular, the embedded curve is projectively normal.  
\end{theorem}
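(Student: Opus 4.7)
The plan is to adapt verbatim the three-step strategy of Section~\ref{proofMainThm}, with $K_C\otimes T^{-1}$ replaced by $K_C\otimes T^{-n}$ throughout.

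\emph{First step: construct the scroll.} The hypothesis $g \ge 3n+4$ combined with the Maroni inequalities~(\ref{MaroniInequalities}) yields $m \ge n$, so Lemma~\ref{higherResVeryAmple} together with the remark that follows supplies very ampleness of $K_C \otimes T^{-n}$. The projection formula gives
\begin{align*}
\pi_*(K_C \otimes T^{-n}) = \OO_{\PP^1}(g-2-m-n) \oplus \OO_{\PP^1}(m-n) \oplus \OO_{\PP^1}(-n-2),
\end{align*}
whence $h^0(K_C \otimes T^{-n}) = g-2n$ and the embedding lives in $\PP^{g-2n-1}$. The same computation for $K_C \otimes T^{-n-1}$ shows that for any $D \in |T|$ the linear span $\overline{D}$ is a line, so $X = \bigcup_{D \in |T|} \overline{D}$ is a rational normal surface of degree $g-2-2n$, desingularized by $\PPE$ with $\cE = \OO_{\PP^1}(g-2-m-n) \oplus \OO_{\PP^1}(m-n)$.

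\emph{Second step: resolve $\OO_C$ on the scroll.} Writing the class of $C \subset \PPE$ as $3H - bR$ (the coefficient $3$ coming from $|T|$ being a degree-$3$ pencil) and using $C \cdot H = \deg C = 2g-2-3n$ forces $b = g-4-3n$, which is nonnegative precisely because of our hypothesis. This gives the short relative resolution
\begin{align*}
0 \leftarrow \OO_C \leftarrow \OO_\PPE \leftarrow \OO_\PPE(-3H + (g-4-3n)R) \leftarrow 0,
\end{align*}
and the two line bundles on $\PPE$ are then resolved over $\OO_{\PP^{g-2n-1}}$ by the Eagon--Northcott-type complexes $\cC^0$ and $\cC^{g-4-3n} \otimes \OO_{\PP^{g-2n-1}}(-3)$. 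The sharpness of the bound $g \ge 3n+4$ enters precisely in ensuring $b \ge 0$, so that both complexes are valid resolutions.

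\emph{Third step: mapping cone and Betti table.} The mapping cone of the resulting double complex is a free resolution of $\OO_C$ as an $\OO_{\PP^{g-2n-1}}$-module. Since every differential is a polynomial map of strictly positive degree (the maps $\Phi$, $\wedge^2 \Phi$, and the cubic defining $C$ on $X$), the total complex is already minimal. At homological degree $j$ one reads off the summand $\cC^0_j \oplus \cC^{g-4-3n}_{j-1} \otimes \OO_{\PP^{g-2n-1}}(-3)$, and sorting by twist produces the three diagonal strands of the Betti table: $\cC^0_j$ contributes to row~$1$ (twist $j+1$) for $1 \le j \le g-2n-3$; the ``$S_kG$-regime'' of $\cC^{g-4-3n}_{j-1}$ contributes to row~$2$ (twist $j+2$) for $1 \le j \le g-3n-3$; and the ``$D_kG^*$-regime'' contributes to row~$3$ (twist $j+3$) for $g-3n-2 \le j \le g-2n-2$. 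The main bookkeeping obstacle is verifying that these three families occupy disjoint positions in the table with nonzero ranks wherever claimed; the decisive observation is that the jump from $S_0 G$ to $D_0 G^*$ inside $\cC^{g-4-3n}$ lands exactly in the gap between rows~$2$ and~$3$ at column $j = g-3n-2$, matching the stated shape. Projective normality is then immediate from the vanishing of row~$0$ above degree~$0$.
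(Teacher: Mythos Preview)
Your argument is correct and follows exactly the paper's own three-step strategy (scroll construction, relative resolution $\OO_\PPE(-3H+(g-3n-4)R)$, Eagon--Northcott mapping cone), with the same numerics throughout; you simply supply a bit more detail on the projection-formula computation and on reading off the Betti table. One small remark: projective normality is really a consequence of the fact that the mapping cone is a resolution of $\OO_C$ with $F_0=\OO_{\PP^{g-2n-1}}$ (equivalently, the length of the resolution equals the codimension, so $C$ is arithmetically Cohen--Macaulay), rather than of the vanishing in row~$0$ per se.
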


The nonzero Betti numbers are given as in \cite[Theorem 2.4]{Na99} for $c = g-2n-2$ and $p = g-3n-3$.

\begin{proof}
By Section \ref{veryAmpleness}, let $C\subset \PP(H^0(C,K_C\otimes T^{-n})^*)=:\PP^{g-2n-1}$ be a smooth embedded curve of genus $g$ and of degree $\deg(C) = 2g - 3n - 2$. Note that 
$g - 2n - 1 \ge 
%3n + 4 - 2n -1 = 
n+3 \ge 4$.
Let 
$$
X= \bigcup_{D\in |T|} \overline{D} \subset \PP^{g-2n-1}
$$
be the rational normal scroll swept out by the trigonal bundle. Since $n+1<m+2$, 
$$
h^0(C, K_C\otimes T^{-n}) - h^0(C, K_C\otimes T^{-n-1}) =2 
$$
by (\ref{MaroniAsMin}). Hence, $X$ is a rational normal surface of degree $g-2n-2$. Let $\cE = \OO_{\PP^1}(g-2-m-n)\oplus \OO_{\PP^1}(m-n)$ be a rank $2$ bundle. Then the rational normal surface $X$ is the image of $\PPE$ as in Section 
\ref{resOnScroll}. Since $T$ is base point free, we consider $C\subset \PPE$. We denote again $H$ and $R$ the generators of the Picard group of $\PPE$ as in Section \ref{resOnScroll} such that $H^2 = g-2n-2$, $H.R= 1$ and $R^2 = 0$ as well as $\OO_\PPE(H)\otimes \OO_C = K_C\otimes T^{-n}$ and $\OO_\PPE(R)\otimes \OO_C = T$. 
 \\
The resolution of $\OO_C$ as an $\OO_\PPE$-module is
\begin{align*}
 \begin{xy}
  \xymatrix{
  0 & \ar[l] \OO_C & \ar[l] \OO_\PPE & \ar[l] \OO_\PPE(-aH + bR) & \ar[l] 0 \\
  }
 \end{xy}
\end{align*} 
for integers $a,b$ since $C$ is a divisor on $\PPE$. We have 
$$
(aH-bR).R = 3 \text{ and } (aH - bR).H = \deg(C) = 2g -3n-2.
$$
Hence, $a=3$ and $b = g-3n-4\ge 0$ and thus the $\OO_\PPE$-modules in 
\begin{align*}
 \begin{xy}
  \xymatrix{
  0 & \ar[l] \OO_C & \ar[l] \OO_\PPE & \ar[l] \OO_\PPE(-3H + (g-3n-4)R) & \ar[l] 0 \\
  }
 \end{xy}
\end{align*} 
can be resolved as $\OO_{\PP^{g-2n-1}}$-modules by Eagon--Northcott type resolutions (see Section \ref{resOnScroll}). As in Section \ref{mappingCone}, the mapping cone of the induced double complex 
$$
[\cC^0 \leftarrow \cC^{g-3n-4}\otimes \OO_{\PP^{g-2n-1}}(-3)]
$$
is a minimal free resolution of $C\subset \PP^{g-2n-1}$. The shape of its Betti table is given as in Theorem \ref{genMainThm}. Indeed, the length of $\cC^0$ is $g-2n-3$ and the degree $2$ syzygies in $\cC^{g-3n-4}$ are in the step $j = b + 1 = g-3n-3$. Since $\cC^{g-3n-4}$ is homologically shifted by $1$ in the mapping cone construction, these syzygies appear in step $g-3n-2$ as stated in the theorem. 
\end{proof}

%\bibliography{papers}{}
%\bibliographystyle{alpha}

\end{document}